\newtheorem{theorem}{Theorem}[section]
\newtheorem{lemma}[theorem]{Lemma}
\newtheorem{proposition}[theorem]{Proposition}
\newtheorem{corollary}[theorem]{Corollary}
\newtheorem*{facta}{Theorem A}
\theoremstyle{remark}
\newtheorem*{remark}{Remark}
\newcommand{\C}{{\mathbf C}}
\newcommand{\D}{{\mathbf D}}
\newcommand{\N}{{\mathbf N}}
\newcommand{\R}{{\mathbf R}}
\newcommand {\hv} {h^\infty_v}
\newcommand {\kv} {k^\infty_v}
\newcommand{\bl}{{\mathcal B}}
\newcommand{\beq}{\begin{equation}}
\newcommand{\eeq}{\end{equation}}
\newcommand{\beqs}{\begin{equation*}}
\newcommand{\eeqs}{\end{equation*}}
\newcommand{\ba}{\begin{eqnarray}}
\newcommand{\ea}{\end{eqnarray}}
\newcommand{\bas}{\begin{eqnarray*}}
\newcommand{\eas}{\end{eqnarray*}}
\newcommand{\p}{\phi}
\newcommand{\rj}{\rho_j}
\newcommand{\fraction}{\frac}
\newcommand{\ls}{\lesssim}
\newcommand{\gs}{\gtrsim}
\begin{document}

\author
{Kjersti Solberg Eikrem} 
\title
{Hadamard gap series in growth spaces} 
\address{Department of Mathematical Sciences, Norwegian University of Science and
Technology, NO-7491 Trondheim, Norway}
\email{kjerstei@math.ntnu.no}

\begin{abstract}
Let $\hv$ be the class of harmonic functions in the unit disk  which admit a two-sided radial majorant $v(r)$. We consider functions $v $ that fulfill a doubling condition. We characterize functions in $\hv$ that are represented by Hadamard gap series in terms of their coefficients, and as a corollary we obtain  
a characterization  of Hadamard gap series in Bloch-type spaces for  weights with a doubling property.  We show that if $u\in\hv$ is represented by a Hadamard gap series, then $u $ will grow slower than $v$ or oscillate along almost all radii. We use the law of the iterated logarithm for trigonometric series to find an upper bound on the growth of a weighted average of the function $u $, and we show that the estimate is sharp.

\end{abstract}
\subjclass[2010]{Primary: 31A20; Secondary: 42A55, 60F15}
\keywords{Law of the iterated logarithm, harmonic functions, growth space, Hadamard gap series, Bloch-type space}
\maketitle 

\section{Introduction}
\subsection{Growth spaces of harmonic functions}
Let $v$ be a positive increasing continuous function on $[0,1)$, assume that $v(0)=1$ and $\lim_{r\rightarrow 1} v(r)=+\infty$. We study the growth spaces of harmonic functions in the unit disk:
\begin{equation}
\label{eq:0}
h^\infty_v=\{u:\D\rightarrow\R, \Delta u=0, |u(z)|\le Kv(|z|)\ {\text{for some}}\ K>0\}.
\end{equation}
In this article we mainly deal with weights which satisfy the doubling condition  \begin{equation}
\label{vdouble}
v(1-d)\le Dv(1-2d).
\end{equation} 
For such weights we characterize functions in $h^\infty_v$ represented by Hadamard gap series. Further, using this characterization, we study radial behavior of such functions in more detail.

Growth spaces appear naturally as duals to the classical spaces of harmonic and analytic functions; a similar class of harmonic functions satisfying only one-sided estimate with $v(r)=\log\frac1{1-r}$ was introduced by B. Korenblum in \cite{K}. General growth spaces can be found in the works of L. Rubel and A. Shields, and  A. Shields and D. Williams, see \cite{RS,SW}. Multidimensional analogs were recently considered in \cite{AD,EM}.
Various results on coefficients of functions in growth spaces were obtained in \cite{BST}. 

\subsection{Hadamard gap series}
Hadamard gap series are functions of the form
\[f(z)=\sum_{k=1}^\infty a_{n_k}z^{n_k},\quad a_{n_k}\in\C,\]
where $n_{k+1}>\lambda n_k$ and $\lambda>1$; for the harmonic case we take $u=\Re f$. For various spaces of analytic and harmonic functions Hadamard gap series in these spaces can be characterized in terms of the coefficients $a_n$. For example, $\alpha$-Bloch spaces are studied in \cite{Y}, in \cite{Ste} more general Bloch-type spaces are considered, for $Q_K$ spaces results are obtained in \cite{WZ}, and the case of growth spaces of harmonic functions with normal weights (i.e. weights that essentially grow between two powers $(1-r)^{-a}$ and $(1-r)^{-b}$) is recently treated in \cite{XY}. All these results give necessary and sufficient conditions on the coefficients of the series in the form $|a_n|\le f(n)$ for an appropriate function $f$. For general growth spaces, in particular for those with slow growing function $v$, such one-term characterization does not exist. We prove that the condition 
\[\sum_{n_k\le N}|a_{n_k}|\le Cv(1-1/N)\]
characterizes all Hadamard gap series in $h^\infty_v$ for any $v$ that satisfies (\ref{vdouble}). This generalizes previous results for $v=\log\frac1{1-r}$ \cite{LM} and also one for normal weights \cite{XY}. The proof is based on a beautiful theorem of J.-P. Kahane, M.Weiss and G. Weiss \cite{KWW}. As a corollary we obtain the characterization of Hadamard gap series in Bloch-type spaces with weights satisfying a doubling condition.


\subsection{Radial oscillation} Boundary behavior of harmonic functions in growth spaces was studied in \cite{BLMT,LM,EM}. It turns out that a harmonic function $u\in h_v^\infty$ cannot grow as fast as $v$ along most of the radii, precise statements can be found in \cite{BLMT,EM}. In order to describe typical radial behavior of functions in $h_v^\infty$ for the Korenblum space with $v=\log\frac1{1-r}$ the following weighted average was introduced in \cite{LM}
\[
I_u(R,\phi)=\int_{1/2}^R\frac{u(re^{i\phi})}{(1-r)\left(\log\frac1{1-r}\right)^2}dr,\quad R\in (0,1),\quad \phi\in(-\pi,\pi].
\]       
The law of the iterated logarithm applied to this average gives an upper estimate for $I_u(R,\phi)$ and shows that in general it grows slower than the similar average of $|u(re^{i\phi})|$. This result is referred to as radial oscillation of harmonic functions in $h^\infty_v$, see \cite{LM} for details.

One of the aims of this article is to find an appropriate weighted average for other growth spaces $h^\infty_v$. We note that the argument in \cite{LM} cannot be applied for fast growing weights, though it seems plausible that the result still holds. We obtain radial oscillation for the case when $u\in h^\infty_v$ is represented by a Hadamard gap series, the only restriction on $v$ is the doubling condition (\ref{vdouble}). 
The precise statement is the following: Let $u\in h_v^\infty$ be a Hadamard gap series, define
\[
I^{(v)}_u(R,\phi)=\int_{1/2}^R\frac{u(re^{i\phi})dv(r)}{v^2(r)}.\]
Then for almost every $\phi$
\[
\lim_{R\rightarrow 1}\frac{I^{(v)}_u(R,\phi)}{\sqrt{\log v(R)\log\log\log v(R)}}\le C.\]

The proof uses the law of the iterated logarithm for trigonometric series from \cite{W}, see also \cite{SZ}. A straightforward estimate gives $|I^{(v)}_u(R,\phi)|\le K\log v(R)$; our result shows that the growth is much slower for almost every $\phi$. We will also give an example of a function $u\in h^\infty_v$ such that 
\[
\limsup \frac{I_{|u|}^{(v)}(R,\phi)}{\log v(R)}>0\]
for almost every $\phi$. This difference in the asymptotic behavior of $I_u^{(v)}$ and $I_{|u|}^{(v)}$ indicates oscillation. 

\subsection{One-sided estimates}
We consider also classes of harmonic functions satisfying one-sided estimate only,
\begin{equation}
\label{1}
k^\infty_v=\{u:\D\rightarrow\R, \Delta u=0, u(z)\le Kv(|z|)\ {\text{for some}}\ K>0\}.
\end{equation}
In general there is a substantial difference between classes $h_v^\infty$ and $k_v^\infty$, however it cannot be observed on functions represented by Hadamard gap series.

\subsection{Organization of the paper}
In section 2 we first prove an estimate on the coefficients of functions in $k^\infty_v$ for general $v$. For $v$ that fulfills the doubling condition we show that if $u$ is a Hadamard gap series then $u\in h_v^\infty$ if and only if $u\in k_v^\infty$ and characterize such series in terms of their coefficients. As a corollary we get a characterization of Hadamard gap series in general Bloch spaces. In section 3 we prove the result on radial oscillation and show that it is precise. 

For convenience we define a new function $g:[1,\infty)\rightarrow[1,\infty)$ such that $g(x)=v(1-x^{-1})$. Then (\ref{vdouble}) is equivalent to
\begin{equation}
\label{gdouble}
g(2x)\le Dg(x).
\end{equation}
We will keep this notation throughout the paper, and constants $K$ in (\ref{eq:0}), $D$ and $\lambda$ will preserve their identities. Other constants need not be the same in each place. From now on we will write $I_u(R,\phi)$ instead of $I^{(v)}_u(R,\phi)$ for simplicity.

\section{Coefficient estimates and Hadamard gap series }

\subsection{Coefficient estimates}
Each real harmonic function $u$ in the unit disk can be written as
\[
u(re^{i\phi})=\Re\sum_{n =0}^{\infty} a_n z^n, 
 \qquad a_n \in \C,\]
where 
\[  
a_n=\frac{r^{-n}}{\pi}\int_{-\pi}^\pi u(re^{i\phi})e^{-in\phi}d\phi,\qquad n\ge 0.\] 
We will now estimate the size of $|a_n | $ for $u \in \kv $. 

\begin{proposition}
\label{coefficient}
Assume that $u\in\kv$ and 
\[u(re^{i\phi})=\Re\sum_{n =0}^{\infty} a_n z^n, 
\qquad a_n \in \C.\]
Then $|a_n|\le C g ( n)$ for some $C =C (K) $, where $K $ is as in (\ref{1}).
\end{proposition}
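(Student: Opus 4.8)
The plan is to start from the given integral representation of $a_n$ and to convert the one-sided bound $u(re^{i\phi})\le Kv(r)$ into an honest $L^1$ estimate by a subtraction trick. For $n\ge 1$ we have $\int_{-\pi}^\pi e^{-in\phi}\,d\phi=0$, so inserting the constant $Kv(r)$ does not change the integral:
\[
\pi a_n r^n=\int_{-\pi}^\pi u(re^{i\phi})e^{-in\phi}\,d\phi=\int_{-\pi}^\pi\bigl(u(re^{i\phi})-Kv(r)\bigr)e^{-in\phi}\,d\phi.
\]
The whole point is that $u(re^{i\phi})-Kv(r)\le 0$, so the integrand now has a definite sign and $|u(re^{i\phi})-Kv(r)|=Kv(r)-u(re^{i\phi})$. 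Taking absolute values and using $|e^{-in\phi}|=1$ together with the mean value property $\frac1{2\pi}\int_{-\pi}^\pi u(re^{i\phi})\,d\phi=u(0)$ gives
\[
\pi|a_n|r^n\le\int_{-\pi}^\pi\bigl(Kv(r)-u(re^{i\phi})\bigr)\,d\phi=2\pi Kv(r)-2\pi u(0).
\]
This is the essential step: the bare one-sided estimate does not control the oscillatory integral $\int u e^{-in\phi}$, but after subtracting $Kv(r)$ the right-hand side reduces to the average of a nonnegative function.

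Next I would optimize the free radius. Choosing $r=1-1/n$ makes $r^n=(1-1/n)^n$ bounded below by an absolute constant for $n\ge 2$, while by definition $v(1-1/n)=g(n)$. Hence for $n\ge 2$,
\[
|a_n|\le C\bigl(Kg(n)-u(0)\bigr).
\]
Since $g(x)=v(1-x^{-1})\ge v(0)=1$ and $u(0)$ is a fixed finite number, the additive term $-u(0)$ is absorbed into a constant multiple of $g(n)$, yielding $|a_n|\le Cg(n)$. The coefficients $a_0$ and $a_1$ are handled separately at a fixed radius (say $r=1/2$), using $u(0)\le Kv(0)=K\le Kg(1)$ and the same integral bound; note that this step needs no doubling hypothesis, since $r=1-1/n$ produces $g(n)$ exactly.

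The main subtlety to be flagged is the one-sidedness itself. The class $\kv$ is invariant under subtracting an arbitrary constant from $u$ (up to enlarging $K$, since $v\ge 1$), so $u(0)$ cannot be bounded below by the hypothesis, and the estimate unavoidably carries the term $-u(0)$. The content of the argument is that this is the only obstruction and, being a single constant independent of $n$, it does not affect the growth rate; the decisive idea remains the subtraction of $Kv(r)$, which turns an uncontrolled oscillatory integral into the mean of a nonnegative quantity.
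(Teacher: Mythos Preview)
Your proof is correct and follows essentially the same approach as the paper. The paper bounds $\int_{-\pi}^{\pi}|u(re^{i\phi})|\,d\phi$ by splitting over the sets $\{u\ge 0\}$ and $\{u<0\}$ and invoking the mean value property to obtain $4\pi Kv(r)-2\pi u(0)$; your subtraction of the constant $Kv(r)$ before taking absolute values is a minor variant of the same idea (yielding the slightly sharper $2\pi Kv(r)-2\pi u(0)$), and both arguments then finish by choosing $r=1-1/n$.
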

This proposition is a generalization of Theorem 1 (i)  
   in \cite[p.~209]{K}, where $v (r) =\log \frac1{1 -r} $. It also generalizes Theorem 1.12 (a) in \cite{BST}, which is valid for $u\in\hv$ if $v$ satisfies \eqref{vdouble}.

\begin{proof}
We have
\[|a_n|=\left|\frac{r^{-n}}{\pi}\int_{-\pi}^\pi u(re^{i\phi})e^{-in\phi}d\phi\right|
\le\frac{r^{-n}}{\pi}\int_{-\pi}^{\pi}|u(re^{i\phi})|d\phi.\]
Note that 
\[\int_{-\pi}^{\pi}|u(re^{i\phi})|d\phi=\int_{u(re^{i\phi})\ge 0}u(re^{i\phi})d\phi-\int_{u(re^{i\phi})<0}u(re^{i\phi})d\phi=\]
\[
2\int_{u(re^{i\phi})\ge 0}u(re^{i\phi})d\phi-\int_{-\pi}^{\pi} u(re^{i\phi})d\phi\le 4\pi Kv (r)-2\pi u(0).\]
This implies that for any $r\in(0,1)$ 
\[|a_n|\le C_1 r^{-n} v (r)= C_1 r^{-n}g \left(\frac1{1-r}\right),\]
where $C_1$ depends on $K$. Let 
$r=1-1/n$, then 
$|a_n|\le C g ( n).$
\end{proof}

The estimate $|a_n|\le C g ( n)$ is in general not enough to imply that a function is in $\kv$, but in some cases it is, we will come back to that in Corollary \ref {fast}. 

\subsection{Characterization of  Hadamard gap series in growth spaces}

 We will prove the following: 
 \begin{theorem}
 \label{lacunary}
 Let $\{n_k\}_{k=1}^\infty$ be a sequence of positive integers such that $n_{k+1}\ge \lambda n_k$ for each $k$, where $\lambda>1$. Assume $v $ satisfies  (\ref{vdouble}) and let
 \beqs
 \label{eq:lacseries}
 u(z)=\Re\sum_k a_{n_k}z^{n_k}, \qquad a_{n_k} \in \C,
 \eeqs
 where the series converges in the unit disk.
Then  
$u\in\kv $ if and only if
there exists $\gamma$ such that $\sum_{n_k\le N}|a_{n_k}|\le \gamma \, g(N)$
for any $N\in \N$.
 \end{theorem}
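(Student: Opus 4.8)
The plan is to prove the two implications separately: the sufficiency by an elementary summation argument exploiting the doubling of $g$, and the necessity via the lacunary maximum estimate of Kahane--Weiss--Weiss.

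For the sufficiency, I would assume $\sum_{n_k \le N} |a_{n_k}| \le \gamma g(N)$ for all $N$ and use $u(re^{i\phi}) \le \sum_k |a_{n_k}| r^{n_k}$, so that it suffices to bound the latter by a constant times $g(1/(1-r))$. Fix $r\in(1/2,1)$, put $N = 1/(1-r)$, and split the sum into the block $\{n_k\le N\}$ together with the dyadic blocks $\{2^{j-1}N < n_k \le 2^j N\}$, $j\ge 1$. On the $j$-th block one has $r^{n_k}\le \big((1-1/N)^N\big)^{2^{j-1}}\le e^{-2^{j-1}}$, while \eqref{gdouble} gives $\sum_{n_k\le 2^jN}|a_{n_k}|\le \gamma D^j g(N)$. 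Summing the block contributions produces the convergent series $\gamma g(N)\big(1+\sum_{j\ge1}e^{-2^{j-1}}D^j\big)$, since the doubly-exponential decay beats the geometric growth $D^j$; this yields $u(re^{i\phi})\le C\gamma\, v(r)$, so that $u\in\kv$.

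For the necessity, assume $u\in\kv$, so $\sup_\phi u(re^{i\phi}) \le K v(r) = Kg(1/(1-r))$ for every $r$. Fixing $r\in(1/2,1)$ and writing $c_k = a_{n_k}r^{n_k}$, the function $\phi\mapsto u(re^{i\phi}) = \Re\sum_k c_k e^{in_k\phi}$ is a lacunary series with gap ratio $\ge\lambda$. The key input is the theorem of \cite{KWW}, which supplies a constant $c(\lambda)>0$ with
\[
\sup_\phi\,\Re\sum_k c_k e^{in_k\phi}\ \ge\ c(\lambda)\sum_k|c_k| = c(\lambda)\sum_k|a_{n_k}|r^{n_k}.
\]
Combining this with the one-sided bound gives $\sum_k |a_{n_k}|r^{n_k}\le (K/c(\lambda))\,g(1/(1-r))$. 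To finish I would take $r = 1-1/N$ and discard the terms with $n_k>N$; for the survivors $r^{n_k} = (1-1/N)^{n_k}\ge (1-1/N)^N\ge c_0>0$, whence $\sum_{n_k\le N}|a_{n_k}|\le c_0^{-1}\sum_k|a_{n_k}|r^{n_k}\le \gamma g(N)$.

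The routine part is the sufficiency, where only the doubling of $g$ and the elementary inequality $(1-1/N)^N\le e^{-1}$ enter. The main obstacle is the necessity: the trivial estimate $u(re^{i\phi})\le\sum_k|c_k|$ points the wrong way, and what the argument genuinely needs is the reverse \emph{one-sided} bound $\sup_\phi\Re\sum_k c_k e^{in_k\phi}\ge c(\lambda)\sum_k|c_k|$. Such a lower bound fails for general Fourier series but holds for lacunary ones, and recognizing that the Kahane--Weiss--Weiss theorem furnishes precisely this one-sided comparison — rather than only a two-sided $L^\infty$ estimate, which would leave the sign of $u$ uncontrolled and would bound only $\big(\sum|a_{n_k}|^2\big)^{1/2}$ — is the heart of the proof.
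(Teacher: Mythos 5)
Your proof is correct, and both directions rest on the same key input as the paper's --- the one-sided Kahane--Weiss--Weiss lower bound for lacunary series --- but the surrounding decompositions are genuinely different. For the necessity, the paper truncates $u(r_N e^{i\phi})$ at frequency $NM$, applies the KWW theorem only to the finite partial sum $s_N$, and then must show that the tail $t_N$ is $O(g(N))$ uniformly in $\phi$; that tail estimate is exactly where Proposition \ref{coefficient} and the lacunarity-plus-doubling ratio test (\ref{g}) enter. You instead apply KWW directly to the full series at radius $r=1-1/N$, which is absolutely convergent there, and so you bypass the tail estimate and Proposition \ref{coefficient} altogether; the only thing to verify is that the KWW theorem is available for infinite absolutely convergent lacunary series rather than just finite sums, which it is (Theorem I of \cite{KWW} is stated for $\sum_k|c_k|<\infty$, and in any case the infinite version follows from the finite one by truncating and absorbing an $\varepsilon$-tail). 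For the sufficiency, the paper again splits at $NM$ and controls the tail term by term via $|a_{n_k}|\le\gamma g(n_k)$ together with the geometric decay of $g(n_k)r^{n_k}$ along the lacunary sequence, whereas your dyadic-block argument uses only the partial-sum hypothesis and the doubling condition (\ref{gdouble}); this has the virtue of making transparent that lacunarity plays no role in that direction. Two trivial points to tidy up: $N=1/(1-r)$ need not be an integer (round up and apply doubling once more), and the degenerate case $N=1$ in the necessity, where $r=0$, should be dispatched separately.
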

This 
theorem generalizes Proposition 5.1 in \cite{LM}, which is based on results from \cite{K} and \cite{KWW}.  
\medskip

\begin {proof}
We first assume that $u\in\kv $. 
Let $r_N=2^{-1/N} $ 
and let $M $ be a constant.
 We have
\[
u(r_Ne^{i\phi})=\Re \sum_{n_k\le NM} a_{n_k}r_N^{n_k} e^{in_k\phi}+\Re\sum_{n_k> NM} a_{n_k}r_N^{n_k} e^{in_k\phi}=s_N(\phi)+t_N(\phi).\]
 By Proposition \ref{coefficient} 
\[ |t_N(\phi)|\le C\sum_{n_k>NM}g ( n_k) 2^{-n_k/N}.\] 
Since $u $ is a Hadamard gap series there exists $\lambda>1 $ such that $n_{j+1}>\lambda n_{j}$ for all $j $. We may assume that $\lambda \le 2$, and by adding extra terms with $a_j =0 $ we may also assume that $n_{j+1}\le 4n_{j} $. 
Let $n_k \geq NM $.
Then by (\ref{gdouble}) 
\begin{eqnarray}
\label{g}
 \frac{g (n_{k+1} ) 2^{- n_{k+1}/N} }{g (n_{k} ) 2^{- n_{k}/N}}\le  \frac{g (4n_{k})  }{g (n_{k} ) }2^{-n_k (\lambda-1)/N} 
 \le D^2 2^{-M(\lambda -1)} <q <1
\end{eqnarray}
when $M $ is 
sufficiently large. 
Using this and (\ref{gdouble}), the reminder term can then be estimated as follows
\[
|t_N(\phi)|\le C\sum_{n_k>NM}g ( n_k) 2^{-n_k/N}\le C_q g (NM) \le B 
g ( N).\]  
By Theorem I in \cite{KWW} there exists $\alpha=\alpha(\lambda)>0$ and $\phi_0\in(-\pi,\pi)$ such that
\[
s_N(\phi_0)	\ge\alpha\sum_{n_k\le NM}|a_{n_k}r_N^{n_k}|.\]
We have
$r_N \le 1 -\frac{1}{3N}$,  
so
$$ g \left(\frac1{1 -r_N}\right)\le g (3N)\le D^2 g (N).$$
Thus 
\begin{eqnarray*}
\sum_{n_k\le N}|a_{n_k}|&\le& 2\sum_{n_k\le NM}|a_{n_k}r_N^{n_k}|\le 2 \alpha^{-1}\left(u(r_N e^{i\phi_0})+|t_N(\phi_0)|\right)\\
& \le& 2 \alpha^{-1}\left(g \left(\frac1{1 -r_N}\right) +B g (N)\right) \le \gamma g ( N).
\end{eqnarray*}

 Now assume there exists $\gamma$ such that $\sum_{n_k\le N}|a_{n_k}|\le \gamma \, g(N)$
for any $N\in \N$. 
Let $r_N<r\le r_{N+1}$. 
Then 
  $\frac{N}{\log 2} \le \frac1{1 -r} $, and using (\ref{gdouble})  and (\ref{g}) we get 
\begin{eqnarray}
\label{abs}
 u(re^{i\phi}) 
 &\le& \sum_{n_k\le NM}|a_k|+\sum_{n_k>NM}\gamma g( n_k) r^{n_k}\le C_1 g (MN) \\
 &\le& C_2 g \left(\frac{N}{\log 2}\right) \le C_2 g \left( \frac{1}{1-r}\right).\nonumber
 \end{eqnarray}
\end{proof} 

\begin {corollary}
 If $u $ is a Hadamard gap series and $v $ fulfills \eqref {vdouble}, then $u\in\kv \Leftrightarrow u\in\hv$.
 \end {corollary}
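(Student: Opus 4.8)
The plan is to deduce this immediately from the characterization just established in Theorem~\ref{lacunary}, exploiting the fact that the coefficient condition $\sum_{n_k\le N}|a_{n_k}|\le\gamma\,g(N)$ is blind to the signs of the coefficients. One inclusion is free: since $|u(z)|\le Kv(|z|)$ trivially implies $u(z)\le Kv(|z|)$, we have $\hv\subseteq\kv$ for every weight, with no gap hypothesis needed.

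For the reverse inclusion, suppose $u=\Re\sum_k a_{n_k}z^{n_k}\in\kv$. First I would invoke the ``only if'' direction of Theorem~\ref{lacunary} to obtain a constant $\gamma$ with $\sum_{n_k\le N}|a_{n_k}|\le\gamma\,g(N)$ for all $N$. The key observation is then that $-u=\Re\sum_k(-a_{n_k})z^{n_k}$ is again a Hadamard gap series, with the same lacunary exponents $\{n_k\}$ and with coefficients $-a_{n_k}$ of the same modulus. Hence the very same bound $\sum_{n_k\le N}|-a_{n_k}|\le\gamma\,g(N)$ holds, and applying the ``if'' direction of Theorem~\ref{lacunary} to $-u$ yields $-u\in\kv$.

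Finally I would combine the two one-sided estimates. From $u\in\kv$ we get $u(z)\le K_1 v(|z|)$, and from $-u\in\kv$ we get $-u(z)\le K_2 v(|z|)$, that is $u(z)\ge -K_2 v(|z|)$. Taking $K=\max\{K_1,K_2\}$ gives $|u(z)|\le Kv(|z|)$, so $u\in\hv$, which completes the equivalence.

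There is no substantial obstacle here, since the entire content has already been packaged into Theorem~\ref{lacunary}. The only point that requires a moment's care is that the characterization is genuinely sign-independent: it constrains $\sum_{n_k\le N}|a_{n_k}|$ rather than the values of the function, so it applies verbatim to $-u$. Everything else is the one-line combination of the two inequalities above.
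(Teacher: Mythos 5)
Your argument is correct and is precisely the paper's proof, just written out in full: the paper's one-line justification is that the coefficient bound $\sum_{n_k\le N}|a_{n_k}|\le\gamma\,g(N)$ holds for $-u$ whenever it holds for $u$, which is exactly your sign-independence observation combined with the two applications of Theorem~\ref{lacunary}. Nothing to add.
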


\begin {proof}
If $\sum_{n_k\le N}|a_{n_k}|\le \gamma \, g(N)$ is true for $u$, then of course it is also true for $-u$. 
\end{proof}

\begin {corollary}
 Let $f $ be a holomorphic Hadamard gap series in the unit disk and let $v $ satisfy \eqref {vdouble}. If $\Re f \in\kv $ , then $\Re f,\Im f\in\hv $.
\end {corollary}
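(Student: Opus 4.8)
The plan is to reduce the entire statement to the coefficient condition of Theorem \ref{lacunary} and then exploit the fact that this condition is invariant under multiplication of $f$ by a unimodular constant. Write $f(z)=\sum_k a_{n_k}z^{n_k}$ with $a_{n_k}\in\C$, so that $\Re f=\Re\sum_k a_{n_k}z^{n_k}$ is precisely of the form treated in Theorem \ref{lacunary}. Since $\Re f\in\kv$ by hypothesis, that theorem supplies a constant $\gamma$ with
\[\sum_{n_k\le N}|a_{n_k}|\le\gamma\,g(N)\qquad\text{for all }N\in\N.\]
The crucial feature of this bound is that it involves only the moduli $|a_{n_k}|$, and replacing $f$ by $cf$ for any $c$ with $|c|=1$ changes neither the gap sequence $\{n_k\}$ nor these moduli.

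First I would dispose of $\Re f$: it is the real part of a holomorphic Hadamard gap series and lies in $\kv$, so the preceding corollary (the equivalence $\kv\Leftrightarrow\hv$ for gap series) gives $\Re f\in\hv$ at once. Next I would handle $\Im f$ through the identity $\Im f=\Re(-if)$. Here $-if(z)=\sum_k(-i\,a_{n_k})z^{n_k}$ is again a holomorphic Hadamard gap series with the same gap sequence and with coefficients of the same modulus $|a_{n_k}|$. Hence the displayed coefficient bound holds verbatim for $-if$, and the converse direction of Theorem \ref{lacunary} yields $\Im f=\Re(-if)\in\kv$; one further application of the preceding corollary then gives $\Im f\in\hv$.

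There is no genuine obstacle in this argument; the only step requiring a word of care is the verification that rotating $f$ by a unimodular constant preserves the hypotheses of Theorem \ref{lacunary}, namely the lacunarity $n_{k+1}\ge\lambda n_k$ and the modulus-based coefficient estimate. Once this is noted, the four rotations $f,-f,-if,if$ have real parts $\Re f,-\Re f,\Im f,-\Im f$ and coefficients of modulus $|a_{n_k}|$ throughout, so Theorem \ref{lacunary} places all of them in $\kv$, which is exactly the two-sided assertion $\Re f,\Im f\in\hv$. The conclusion then follows mechanically from the theorem together with the earlier corollary.
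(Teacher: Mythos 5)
Your argument is correct and is exactly the intended derivation: the paper states this corollary without proof as an immediate consequence of Theorem \ref{lacunary} and the preceding corollary, and the key observation that $\Im f=\Re(-if)$ has coefficients of the same moduli (so the coefficient condition transfers verbatim) is precisely what makes it immediate. Nothing further is needed.
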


  For another result of this type, see \cite {C}.  There it is proven that if $v (r)=\left (\fraction {1} {1 -r}\right)^\alpha$ for $\alpha>1 $ and $f $ is holomorphic, then $\Re f \in\kv $ implies $\Re f \in\hv $ and $\Im f\in\hv $.
For some $v $ the fact that $u\in\kv $ will imply that $u\in\hv $, see \cite {N} and \cite {B}.

\begin {corollary}
\label {fast}
If   $u $ is a Hadamard gap series and $g $ is such that for each $q>1 $ there is $A (q)>1 $ that satisfies 
\beq
\label{implication}
x>q y\,\Rightarrow \, g (x)>A g (y)
\eeq
 when $y>y_0 $, 
 then
 $|a_{n_k}|\le C g (n_k) $ implies $u\in\hv$.
\end {corollary}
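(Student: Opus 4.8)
The plan is to reduce the statement to the characterization already proved and then verify the coefficient–sum bound by a purely real-variable argument. By the first corollary above, for a Hadamard gap series the classes $\kv$ and $\hv$ coincide, so it suffices to show $u\in\kv$; by Theorem \ref{lacunary} this amounts to producing a constant $\gamma$ with $\sum_{n_k\le N}|a_{n_k}|\le\gamma\, g(N)$ for every $N$. Inserting the hypothesis $|a_{n_k}|\le C g(n_k)$ into the left-hand side, the whole problem collapses to the estimate
\[
\sum_{n_k\le N}g(n_k)\le C' g(N),
\]
with $C'$ independent of $N$. (Convergence of the series in the disk is automatic: the doubling condition (\ref{gdouble}) forces $g$ to be polynomially bounded, so $|a_{n_k}|\le Cg(n_k)$ gives convergence for every $r<1$.)

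To establish the displayed inequality I would exploit that the exponents are lacunary together with the growth property (\ref{implication}). Fix some $q$ with $1<q<\lambda$ and let $A=A(q)>1$ and $y_0$ be the corresponding constants in (\ref{implication}). Since $n_{k+1}\ge\lambda n_k>q n_k$, whenever $n_k>y_0$ we get $g(n_{k+1})>A g(n_k)$; thus along the tail of the sequence the values $g(n_k)$ grow at least geometrically with ratio $A$. Writing $n_K$ for the largest exponent not exceeding $N$ and summing backwards, the terms with $n_k>y_0$ are dominated by $g(n_K)\sum_{j\ge0}A^{-j}=\frac{A}{A-1}g(n_K)\le\frac{A}{A-1}g(N)$, where the last step uses that $g$ is increasing and $n_K\le N$.

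It remains to absorb the finitely many exponents with $n_k\le y_0$. Their contribution is a fixed constant $S_0$ independent of $N$, and since $g$ takes values in $[1,\infty)$ we have $S_0\le S_0\, g(N)$. Collecting the two pieces yields $\sum_{n_k\le N}g(n_k)\le\bigl(S_0+\frac{A}{A-1}\bigr)g(N)$, which is the required bound; multiplying by $C$ gives the hypothesis of Theorem \ref{lacunary}, so $u\in\kv$ and hence $u\in\hv$. I do not expect a genuine obstacle: the one idea that does the work is that (\ref{implication}) converts the lacunary gaps into geometric growth of $g(n_k)$, after which everything is a convergent geometric series and the only bookkeeping is the harmless tail of small indices.
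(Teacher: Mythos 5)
Your proof is correct and follows essentially the same route as the paper's: reduce to the coefficient-sum criterion of Theorem \ref{lacunary} and bound $\sum_{n_k\le N}g(n_k)$ by a geometric series, since (\ref{implication}) together with lacunarity forces $g(n_{k+1})>Ag(n_k)$. The paper states this in one line; you have simply filled in the details (choice of $q<\lambda$, the finitely many indices below $y_0$, convergence of the series), all of which check out.
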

\begin {proof}
We get
$$\sum_{n_k\le N}|a_{n_k}|\le C\sum_{n_k\le N}g (n_k)\le C_1  g( N)$$ 
whenever $\fraction{n_{k+1}} {n_k}\ge \lambda>1 $, where $C_1 $ depends on $A$.  
\end {proof}

For example when $g (x) =x^{\gamma},\gamma> 0 $, (\ref{implication}) is satisfied. This corollary is slightly more general than 
Theorem 2.3 in \cite {XY}, which concerns 
normal weights.

For general $v $, the inequality $|a_{n}|\le C g ( n)$ does not imply that $u\in\kv$ even when $u$ is represented by a Hadamard gap series. 
It is not difficult to construct $g $ (for example $g (x) =\log x $) such that for any $C $ 
$$\sum_{ k\le n} g (2^k)>C g (2^n) \qquad \mathrm{ for\; some}\; n =n (C) .  $$ 
 Then by  
 the previous theorem  $u(z) =\sum_{k =1}^\infty g (2^k)z^{2^k} \notin\kv.  $


\subsection{Application to Bloch-type functions}
Let $\mu $ be a decreasing continuous function on $[0,1) $ and assume $\lim_ {r\rightarrow 1^-} \mu(r)=0$.
A harmonic function is in the Bloch-type space $\bl_\mu$ if
$$| |u ||_{\bl_\mu} =\sup_{z\in\D} ( |u (0) |+ \mu(|z|)|\nabla u (z)|)  <\infty. $$
Hadamard gap series in  Bloch-type  spaces have been characterized in terms of their coefficients  for many concrete $\mu $. For $\mu(r)=(1-r^2)^\alpha$, 
a Hadamard gap series is  
in $\bl_\mu$ if  and only if
$\limsup_{j \rightarrow \infty} |a_j|n_j^{1-\alpha}  <\infty$, this is from \cite{Y}.  In \cite {Ste} 
the condition $$\limsup_{j \rightarrow \infty} n_j |a_j|\mu (1 -1/n_j) <\infty$$  is obtained under some restrictions on the weight,  in particular for the weight $\mu(r)=(1-r)\Pi_{j =1}^k \log^{[j]}\fraction {e^k}{1-r}$, where $\log^{[j]} $ is the logarithm applied $j $ times.
Using Theorem \ref {lacunary} we will obtain a more general result, where we only assume that $\mu $ does not decay too fast, more presicely, that it fullfills a condition similar to \eqref{vdouble}:
\beq
\label {wdouble}
 \mu\left(1-\fraction {d} {2}\right) \ge B \mu(1-d).
\eeq

\begin {corollary}
\label {Bloch function}
Let $\mu $ fulfill \eqref{wdouble}.
A Hadamard gap series is in the Bloch-type space $\bl_{\mu} $ 
if and only if there is a $C $  such that
$$\sum_{n_k\le N}n_k|a_{n_k}|\le C \, \fraction {1} {\mu (1 -1/N)} .  $$ 
\end {corollary}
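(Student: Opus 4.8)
The plan is to reduce the statement to Theorem~\ref{lacunary} by differentiating. For a holomorphic $f$ with $u=\Re f$ one has $|\nabla u(z)|=|f'(z)|$, so $u\in\bl_\mu$ is equivalent to $|f'(z)|\le C/\mu(|z|)$ for all $z\in\D$ (the term $|u(0)|$ in the Bloch norm is harmless). Setting $v=1/\mu$, the first thing I would check is that $v$ is an admissible growth-space weight obeying \eqref{vdouble}. Indeed, replacing $d$ by $2d$ in \eqref{wdouble} gives $\mu(1-d)\ge B\,\mu(1-2d)$, that is $v(1-d)\le B^{-1}v(1-2d)$, which is \eqref{vdouble} with $D=1/B$; correspondingly $g(x)=v(1-1/x)=1/\mu(1-1/x)$ satisfies \eqref{gdouble}.

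Next I would write $f'(z)=\sum_k b_k z^{m_k}$ with $b_k=n_k a_{n_k}$ and $m_k=n_k-1$, a holomorphic Hadamard gap series. Since $n_k\to\infty$ and $n_{k+1}\ge\lambda n_k$, the ratios $(n_{k+1}-1)/(n_k-1)$ tend to $\lambda$, so they exceed some $\lambda'\in(1,\lambda)$ for all large $k$; the finitely many initial terms, together with the constant term that arises when $n_1=1$, form a bounded perturbation affecting neither membership in the growth space nor the coefficient sums for large $N$, and may be discarded. I would then observe that $|f'(z)|\le Cv(|z|)$ is equivalent, up to constants, to $\Re f'\in\kv$: the bound $\Re f'\le|f'|$ is immediate, while the converse follows from the preceding corollary, which says that for a holomorphic Hadamard gap series $\Re f'\in\kv$ forces $\Re f',\Im f'\in\hv$, whence $|f'|\le|\Re f'|+|\Im f'|\le Cv$.

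Applying Theorem~\ref{lacunary} to the harmonic Hadamard gap series $\Re f'$ then gives that $\Re f'\in\kv$ if and only if there is $\gamma$ with $\sum_{m_k\le N}|b_k|\le\gamma\,g(N)$ for every $N$, i.e. $\sum_{n_k\le N+1}n_k|a_{n_k}|\le\gamma\,g(N)$. To pass to the stated form I would use, in one direction, monotonicity of $g$ together with $\sum_{n_k\le N}\le\sum_{n_k\le N+1}$, and in the other direction the doubling $g(N+1)\le g(2N)\le D\,g(N)$. These two observations show the condition is equivalent to $\sum_{n_k\le N}n_k|a_{n_k}|\le C\,g(N)=C/\mu(1-1/N)$ for all $N$, as claimed.

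The essential analytic content is inherited from Theorem~\ref{lacunary} and the corollary, so the work here is organizational. I expect the only real care to be needed in two places: verifying that the shifted exponents $n_k-1$ remain lacunary and disposing cleanly of the initial and constant terms, and tracking the single index shift through the doubling property so that the $N$ appearing in the weight matches the $N$ in the summation.
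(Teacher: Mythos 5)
Your proof is correct and follows essentially the same route as the paper: reduce to Theorem~\ref{lacunary} by differentiating and setting $v=1/\mu$. The paper's only (minor) difference is that it works with $\Re\sum_k n_k a_{n_k}z^{n_k}$ (essentially $zf'$ rather than $f'$), which leaves the exponents $n_k$ unchanged and so sidesteps the shifted-lacunarity and index-shift bookkeeping that occupies the second half of your argument.
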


\begin{proof}
A function $ u(z)=\Re\sum_k a_{n_k}z^{n_k}$ is in $ \bl_\mu$ if and only if $\Re\sum_k n_k a_{n_k}z^{n_k} $ is in  $\hv $ for $v (r) =1/\mu (r) $. Then the result follows from Theorem \ref {lacunary}.
\end {proof}

\begin {corollary}
Let $\mu $ fulfill \eqref{wdouble}.
If   $u $ is a Hadamard gap series and $\mu $ is such that for each $q>1 $ there is $A(q)>1 $ that satisfies 
\beq
\label{implication w}
x>q y\,\Rightarrow \, \mu (1-1/y)>A \mu (1-1/x)
\eeq
 when $y>y_0 $, 
 then 
  $\sup_k n_k|a_{n_k}|\mu (1 -1/n_k)\le C $
  implies $u\in\bl_\mu$.
\end {corollary}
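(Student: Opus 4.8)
The plan is to reduce the statement to Corollary \ref{fast} by passing to the derivative series and reading everything in growth-space language. Set $v=1/\mu$ and keep the paper's notation $g(x)=v(1-1/x)=1/\mu(1-1/x)$. First I would record the two dictionary translations. Under $v=1/\mu$ the hypothesis \eqref{wdouble} on $\mu$ is exactly the doubling condition \eqref{vdouble} on $v$ (equivalently \eqref{gdouble} on $g$): taking reciprocals, $\mu(1-d/2)\ge B\mu(1-d)$ reads $v(1-d/2)\le B^{-1}v(1-d)$, and substituting $d\mapsto 2d$ gives $v(1-d)\le B^{-1}v(1-2d)$. Likewise \eqref{implication w} becomes precisely \eqref{implication} for $g$, since $\mu(1-1/y)>A\mu(1-1/x)$ says exactly $g(x)>Ag(y)$.

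Next I would introduce the derived series $w(z)=\Re\sum_k b_{n_k}z^{n_k}$ with $b_{n_k}=n_k a_{n_k}$. This is again a Hadamard gap series over the \emph{same} lacunary sequence $\{n_k\}$, so it carries the same $\lambda$; by the polynomial bound $g(n)\lesssim n^{\log_2 D}$ coming from \eqref{gdouble}, the series $w$ converges in the disk. The sup hypothesis $\sup_k n_k|a_{n_k}|\mu(1-1/n_k)\le C$ is literally $|b_{n_k}|\le Cg(n_k)$. Since $g$ satisfies \eqref{implication}, Corollary \ref{fast} applies to $w$ and yields $w\in\hv$ for $v=1/\mu$.

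Finally I would invoke the equivalence established in the proof of Corollary \ref{Bloch function}: $u\in\bl_\mu$ if and only if $\Re\sum_k n_k a_{n_k}z^{n_k}=w\in\hv$ for $v=1/\mu$. Combining this with the previous step gives $u\in\bl_\mu$, as desired.

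I do not expect any serious obstacle; the statement is essentially the Bloch-space transcription of Corollary \ref{fast}. The one point to verify carefully is that both hypotheses are genuinely used, and for distinct reasons: \eqref{wdouble} supplies the doubling that lets Theorem \ref{lacunary}, and hence Corollaries \ref{Bloch function} and \ref{fast}, apply at all, whereas \eqref{implication w} is what converts the one-term bound into the summed bound $\sum_{n_k\le N}n_k|a_{n_k}|\le C_1\,g(N)$. Since \eqref{implication} controls the growth of $g$ from below while \eqref{gdouble} controls it from above, neither implies the other, so both conditions are indeed needed.
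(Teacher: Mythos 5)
Your proposal is correct and is exactly the paper's argument: the paper's proof consists of the single line ``This follows from Corollary \ref{fast} and \ref{Bloch function},'' and you have simply spelled out that reduction (the dictionary $v=1/\mu$, the derived series $n_ka_{n_k}$, and the equivalence from the proof of Corollary \ref{Bloch function}) in full detail.
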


\begin{proof}
This follows from   Corollary \ref {fast} and \ref{Bloch function}.
\end {proof}


\subsection{Examples}
We give examples of functions in  $\hv $ when $v $ 
 fulfills the doubling condition \eqref{vdouble}. These examples will be used later.
Let $A>1 $, $b_0 =1 $ and define $b_n $ by induction as 
\beq
\label {b}
b_{n+1}=\min \{l \in \N: g(2^l)>A g(2^{b_n})\}.
\eeq

\begin{lemma}
\label{first example}
Assume $v $ satisfies (\ref{vdouble}).
Let  
\[
u(z)= \,\Re \sum_{k=0}^\infty g (2^{b_k}) z^{2^{b_k}}, \qquad z \in \D ,
\]
where $b_n $ is given as above.
Then $u \in \hv $.
\end{lemma}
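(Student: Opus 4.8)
The goal is to show $u(z)=\Re\sum_{k=0}^\infty g(2^{b_k})z^{2^{b_k}}$ lies in $\hv$. By the characterization in Theorem \ref{lacunary} (together with the first corollary identifying $\kv$ and $\hv$ for Hadamard gap series), it suffices to verify two things: that $u$ genuinely is a Hadamard gap series, i.e.\ the exponents $2^{b_k}$ satisfy a ratio condition $n_{k+1}\ge\lambda n_k$ with $\lambda>1$, and that the coefficients obey $\sum_{2^{b_k}\le N}g(2^{b_k})\le\gamma\,g(N)$ for all $N$.

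Let me sketch the proof.

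The plan is to exploit the inductive definition (\ref{b}) of the $b_n$. First I would record the two structural facts it forces. On one hand, $b_{n+1}$ is the \emph{smallest} exponent making $g(2^{b_{n+1}})>A\,g(2^{b_n})$; minimality means $g(2^{b_{n+1}-1})\le A\,g(2^{b_n})$, and combined with the doubling bound $g(2x)\le Dg(x)$ from (\ref{gdouble}) this yields an \emph{upper} control $g(2^{b_{n+1}})\le Dg(2^{b_{n+1}-1})\le AD\,g(2^{b_n})$. On the other hand, since $g$ is increasing and unbounded while $A>1$, the sequence $b_n$ is strictly increasing, so $b_{n+1}\ge b_n+1$ and the exponents $2^{b_k}$ satisfy $2^{b_{k+1}}\ge 2\cdot 2^{b_k}$; thus $u$ is a Hadamard gap series with $\lambda=2$.

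Next I would verify the coefficient sum condition. By construction consecutive coefficients grow geometrically: the lower bound $g(2^{b_{n+1}})>A\,g(2^{b_n})$ shows the terms increase by at least a factor $A>1$ at each step, so the partial sum is dominated by a geometric series with ratio $1/A$ and is controlled by its last term, giving $\sum_{j\le n}g(2^{b_j})\le \frac{A}{A-1}\,g(2^{b_n})$. Now fix $N$ and let $n$ be the largest index with $2^{b_n}\le N$; then $\sum_{2^{b_k}\le N}g(2^{b_k})\le\frac{A}{A-1}\,g(2^{b_n})\le\frac{A}{A-1}\,g(N)$ because $g$ is increasing. This establishes the required inequality with $\gamma=\frac{A}{A-1}$, and Theorem \ref{lacunary} then places $u$ in $\kv$, hence in $\hv$ by the corollary.

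The only subtle point, and the step I would be most careful about, is the geometric-growth estimate on the coefficients: it relies essentially on the \emph{lower} bound $g(2^{b_{n+1}})>A\,g(2^{b_n})$ baked into the definition (\ref{b}), which guarantees the partial sums collapse to their largest term. The doubling property (\ref{gdouble}) is what keeps the construction from being vacuous (it ensures $b_{n+1}$ is finite and not too far from $b_n$), but the summation bound itself is driven purely by the multiplicative gap $A$, so no fast-growth hypothesis on $v$ is needed — the argument works for any doubling weight.
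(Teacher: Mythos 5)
Your proof is correct and follows essentially the same route as the paper's: the key step in both is that the lower bound $g(2^{b_{n+1}})>A\,g(2^{b_n})$ forces the partial sums to be dominated by a geometric series, giving $\sum_{n\le N}g(2^{b_n})\le\frac{A}{A-1}g(2^{b_N})$, after which Theorem~\ref{lacunary} applies. You merely make explicit two small points the paper leaves implicit (that $b_n$ is strictly increasing so the exponents form a Hadamard gap sequence, and the passage from the bound at $N=2^{b_n}$ to arbitrary $N$ via monotonicity of $g$), which is fine.
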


\begin{proof}
We have
\begin{eqnarray*}
   \sum_{n =0}^{N } g (2^{b_n} )   \leq  g (2^{b_{N}})\sum_{n =0}^{N}\frac{1}{A^{n}}\le \fraction{A}{A-1}\, g(2^{b_{N}}) 
\end{eqnarray*}
Then by (iii) of the previous theorem  $u \in \hv $. 
\end{proof}

 When $g (t) = t^a $ or $g (t) = (\log t)^a $,  examples of function in $\hv $ are respectively
 \[
u(z)= \,\Re\sum_{k=0}^\infty 2^{ka} z^{2^{k}}\quad \mathrm{and} \quad u(z)= \,\Re\sum_{k=0}^\infty 2^{ka} z^{2^{2^{k}}}.
\]


\section {Oscillation }
\subsection {A preliminary estimate}
We will need the following lemma to prove our main result.
By the relation $a\ls b$ we mean that there exists a constant $C $ that only depends on $D, K$ and $\lambda$ such that $a\le Cb $. If $a\ls b$ and $a\gs b$, we write $a\simeq b$.  The function $v $ is as in the introduction.
\begin {lemma}
\label {integral}
Let $v $ satisfy \eqref{vdouble}. There exist $C $ and $n_0 $, which depend only on $D $, such that
\beqs
\int_0^{s} \fraction { r^{n-1}}{v(r)}dr\le C\fraction {s^{n}} { n v(s)}
\eeqs
for $n\ge n_0$ and $ s\le 1-\fraction1 {n} $.
\end{lemma}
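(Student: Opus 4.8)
The plan is to exploit the fact that, for large $n$, the weight $r^{n-1}$ concentrates its mass near $r=s$ on the scale $1-r\sim (1-s)$, where $v$ is comparable to $v(s)$; the doubling condition (\ref{vdouble}) then controls how much $v$ can have decreased away from $s$. The two naive bounds are both useless: replacing $v(r)$ by $v(0)=1$ loses an entire factor of $v(s)$, while $v(r)\le v(s)$ points the wrong way. So the estimate must be organised around this concentration.

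First I would split the integral at $s/2$. On $[0,s/2]$ I use only $v\ge 1$, giving $\int_0^{s/2} r^{n-1}/v(r)\,dr\le (s/2)^n/n=s^n/(n2^n)$. Since $s\le 1-1/n$ forces $1/(1-s)\le n$, iterating (\ref{gdouble}) yields $v(s)=g(1/(1-s))\le g(n)\le D\,n^{\log_2 D}$, which grows only polynomially in $n$, whereas $2^n$ grows exponentially. Hence $v(s)\le 2^n$ once $n\ge n_0(D)$, and this piece is already $\le s^n/(nv(s))$. This is precisely where the hypothesis $n\ge n_0$ is spent.

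For the main piece $[s/2,s]$ I decompose dyadically in the variable $1-r$: set $\delta=1-s$ and $I_k=\{r:2^k\delta\le 1-r<2^{k+1}\delta\}$. On $I_k$ one has $r>1-2^{k+1}\delta$, so iterating the doubling condition $k+1$ times gives $v(r)\ge v(s)/D^{k+1}$ (the single top shell with $2^{k+1}\delta>1$ handled by $v(r)\ge v(s/2)$, a minor boundary technicality). Crucially I integrate $r^{n-1}$ \emph{exactly} over each shell rather than bounding it by (maximum)$\times$(length): writing $a_k=(1-2^k\delta)_+^{\,n}$,
\[
\int_{I_k}\frac{r^{n-1}}{v(r)}\,dr\le \frac{D^{k+1}}{v(s)}\cdot\frac{a_k-a_{k+1}}{n}.
\]
Summing and using Abel summation, together with $\sum_{k\ge0}D^k(a_k-a_{k+1})=a_0+(1-D^{-1})\sum_{k\ge1}D^k a_k$, reduces everything to showing $\sum_{k\ge1}D^k a_k\le C a_0=Cs^n$.

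The heart of the matter is this last series. Since $a_k/a_0=\big((1-2^k\delta)/(1-\delta)\big)^n\le \exp\!\big(-n(2^k-1)\delta/(1-\delta)\big)$ and the constraint $s\le 1-1/n$ forces $n\delta\ge1$, the exponent is at least $2^k-1$ \emph{independently of $n$}. Hence $\sum_{k\ge1}D^k a_k/a_0\le \sum_{k\ge1}D^k e^{-(2^k-1)}$, a convergent series whose value depends only on $D$, because the super-exponential decay $e^{-2^k}$ defeats the geometric growth $D^k$. Collecting both pieces gives $\int_0^s r^{n-1}/v(r)\,dr\le C s^n/(nv(s))$ with $C$ and $n_0$ depending only on $D$. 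I expect the main obstacle to be exactly the bookkeeping that prevents losing a factor of $v(s)$: one must integrate $r^{n-1}$ exactly over each dyadic shell (the crude max-times-length bound already overshoots the target on the shell nearest $s$), and one must verify that the inequality $n\delta\ge1$ coming from $s\le 1-1/n$ is precisely what makes the geometric series converge uniformly in $n$.
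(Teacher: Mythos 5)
Your proof is correct, but it is organized differently from the paper's. The paper partitions $[0,s]$ by level sets of $v$: it fixes $\gamma=D^l$ for a suitable $l=l(D)$, defines $\rho_j$ by $v(\rho_j)=\gamma v(\rho_{j-1})$, bounds $1/v$ by $\gamma^{-m}$ on $[\rho_m,\rho_{m+1}]$, and then shows the resulting sum $\sum_m \rho_{m+1}^n/\gamma^m$ is dominated by its last term; the crux there is the separation estimate $(2\gamma)^{1/n}\rho_m\le\rho_{m+1}$, which is where both hypotheses $n\ge n_0$ and $s\le 1-1/n$ are spent, via the doubling condition. You instead decompose dyadically in $1-r$, lower-bound $v$ on each shell by $v(s)/D^{k+1}$ using doubling directly, and reduce to the convergence of $\sum_k D^k e^{-(2^k-1)}$, with $n\delta\ge 1$ (from $s\le 1-1/n$) making the decay uniform in $n$ and $n\ge n_0$ used only to dispose of the trivial piece $[0,s/2]$ via $v(s)\le g(n)\le Dn^{\log_2 D}\le 2^n$. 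Both proofs run on the same mechanism --- doubling forces $v$ to decay at most geometrically away from $s$ while $r^{n-1}$ decays superexponentially across dyadic scales --- but your shells are intrinsic to the geometry of $[0,1)$ rather than to the weight, which makes the convergence of the controlling series immediate and avoids the paper's somewhat delicate verification of \eqref{rhok}; one small simplification available to you is that the Abel summation is not needed, since the cruder bound $\int_{I_k}r^{n-1}\,dr\le a_k/n$ already yields $\sum_k D^{k+1}a_k\lesssim a_0$ by the same superexponential decay.
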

When we use $\ls $ in the proof of this lemma, the constants will only depend on $D $.
\begin {proof}
Let $l $ be such that 
\beq
\label {l}
1 +2\log (2 D^l) \le 2^l,
\eeq 
so $l $ only depends on $D $. Let $\rho_0 =0 $ and choose $\rj $ such that 
$v (\rj) =\gamma v (\rho_{j -1} )$  where $\gamma =D^l$.
Choose $k $ such that 
\beq
\label {kn}
\rho_{k}\le s
\le \rho_{k+1} .
\eeq
 Since $v (r)\ge \gamma^m $ when $r\ge \rho_m $, we have
\beq
\label {first part}
\int_0^{s 
} \fraction { r^{n-1}}{v(r)}dr\le\sum_{m=0}^{k-1}   \fraction 1 {\gamma^m}\int_{\rho_{m}}^{\rho_{m+1}}r^{n-1} dr+ \fraction 1 {\gamma^k}\int_{\rho_{k}}^{s}r^{n-1} dr\le\fraction1 {n}\left (\sum_{m=0}^{k-1}   \fraction {\rho_{m+1}^{n}} {\gamma^m}+\fraction {s^{n}} {\gamma^k}\right).
\eeq
We will now show that
\beq
\label {gamma}
\sum_{m=0}^{k-1}  \fraction {\rho_{m+1}^{n}} {\gamma^m}\ls   \fraction {\rho_{k}^{n}} {\gamma^{k - 1}}\ls \fraction {s^{n}} {\gamma^k}.
\eeq
This is true if 
$$2\fraction  {\rho_{m}^{n}} {\gamma^{m-1}} \le \fraction {\rho_{m+1}^{n}} {\gamma^m} $$
for any $ 1\le m\le k-1$.
That is equivalent to $(2\gamma)^{1/n}\rho_{m}\le \rho_{m+1} $. Assume $ s\le 1-\fraction1 {n}$, then by \eqref{kn} we have $1/n\le 1 -\rho_{k} $, and for $n \ge n_0$ for some $n_0 $ large enough,  
$$(2\gamma)^{1/n} =e^{\log (2\gamma)/n}\le 1 +2\log (2\gamma)/n\le  1 +2\log (2\gamma) (1 -\rho_{k}). $$
We need
$$( 1 +2\log (2\gamma) (1 -\rho_{k})) \rho_{m-1}\le \rho_{m} $$
for any $k $ and $m\le k $.
It is enough to check that
$$( 1 +2\log (2\gamma) (1 -\rho_{k})) \rho_{k-1}\le \rho_{k}. $$
(This can be seen by fixing $m $ and letting $k $ vary.)
We rewrite this as
\beq
\label {rhok}
\rho_{k}\ge \fraction {(1 +2\log (2\gamma)) \rho_{k-1}}{1 +2\log (2\gamma) \rho_{k-1}}=1 -\fraction {1 -\rho_{k-1}}{1 +2\log (2\gamma)\rho_{k-1}}.
\eeq
We have
\beq
\label {first part rhok} 
v (\rho_{k})=\gamma v (\rho_{k-1}) =\gamma v (1 - (1 -\rho_{k-1})) 
\eeq
and by \eqref{vdouble} and \eqref{l},
\beq
\label {second part rhok}
 v \left(1 -\fraction {1 -\rho_{k-1}}{1 +2\log (2 \gamma)\rho_{k-1}}\right)\le \gamma v (1 - (1 -\rho_{k-1})).
 \eeq
 Then \eqref {first part rhok} and \eqref {second part rhok} give \eqref {rhok} since $v $ is increasing,
 and therefore \eqref{gamma} is true.
Furthermore,
\beq
\label {gamma2}
\fraction {s^{n}} {\gamma^k}\ls\fraction {s^{n}}{v (s)} .
\eeq
Then by  \eqref {first part}, \eqref {gamma} and 
\eqref {gamma2}, 
we get
$$\int_0^{s} \fraction { r^{n-1}}{v(r)}dr \ls \fraction{s^{n}}{n v (s)}  $$
when $n\ge n_0$ and $ s\le 1-\fraction1 {n} $. 
\end {proof}

\subsection {Main result} 
The following theorem by Mary Weiss will be used to prove our next theorem:

\begin{facta} 
Let $$S(x)=\sum_{k=0}^\infty a_j \cos (n_j\p)+\sum_{k=0}^\infty b_j \sin (n_j\p)$$
where $n_{j+1}/n_j>\lambda>1 $ for all $j $. 
 We write
\begin{gather*}
B_N = \left(\fraction12 \sum_{j=0}^N (a_j^2 +b_j^2)\right)^{1/2}, \qquad M_N =\max_{ 1\le j\le N} (a_j^2 +b_j^2)^{1/2},\\
S_N (\p) =\sum_{j=0}^N a_j    \cos n_j\p +\sum_{j=0}^N b_j \sin (n_j\p).
\end{gather*}
If 
 $B_N\rightarrow \infty $ and $M_N =o (B_N/(\log\log B_N)^{1/2})$ as  $N\rightarrow \infty $, then 
$$\limsup_{N\rightarrow \infty}\fraction {S_N (\p)} {(2B_N^2 \log\log B_N)^{1/2}}=1\qquad \mathrm{for\,a.e.\,}\p.  $$
\end{facta}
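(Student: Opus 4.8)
The plan is to treat $\phi$ as uniformly distributed on $[0,2\pi]$ and to regard the summands $X_j(\phi)=a_j\cos(n_j\phi)+b_j\sin(n_j\phi)$ as \emph{quasi-independent} mean-zero random variables, so that the statement becomes the law of the iterated logarithm of Kolmogorov--Hartman--Wintner type. Each $X_j$ has variance $\tfrac12(a_j^2+b_j^2)$ and $\|X_j\|_\infty=(a_j^2+b_j^2)^{1/2}$, so $\operatorname{Var}(S_N)=B_N^2$ and $\|X_j\|_\infty\le M_N$ for $j\le N$; the hypothesis $M_N=o(B_N/(\log\log B_N)^{1/2})$ is exactly the Lindeberg-type smallness needed to recover the sharp Gaussian constant. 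I would establish the two inequalities $\limsup S_N/(2B_N^2\log\log B_N)^{1/2}\le1$ and $\ge1$ a.e.\ separately, the common engine being a Gaussian exponential-moment estimate.

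The first and hardest step is that estimate. Using the lacunarity $n_{j+1}/n_j>\lambda>1$, I would show that for $t$ in a suitable range (up to order $(\log\log B_N)^{1/2}/M_N$),
\[
\frac{1}{2\pi}\int_0^{2\pi}\exp\bigl(tS_N(\phi)\bigr)\,d\phi\le\exp\Bigl(\tfrac12 t^2B_N^2(1+\varepsilon_N)\Bigr),\qquad \varepsilon_N\to0.
\]
The point is that the gap condition forces the cross terms $\cos(n_i\phi)\cos(n_j\phi)$ and the like to integrate to almost zero, and, more importantly, forces the products appearing after expanding the exponential to factor up to a controllable error, so that $\int e^{tS_N}$ behaves like the product $\prod_j\int e^{tX_j}$ of a genuinely independent family. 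Quantifying this quasi-independence, and bookkeeping the error in terms of $\lambda$ and $M_N/B_N$, is the main obstacle; it is where all the harmonic-analytic content of the lacunary hypothesis is spent, and it is essentially the Salem--Zygmund machinery sharpened to exponential precision.

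Granting the exponential estimate, the \textbf{upper bound} follows a Borel--Cantelli argument. Fixing $\delta>0$ and applying Markov's inequality with the optimal $t$ gives
\[
\bigl|\{\phi:S_N(\phi)>(1+\delta)(2B_N^2\log\log B_N)^{1/2}\}\bigr|\ls(\log B_N)^{-(1+\delta)^2+o(1)}.
\]
Choosing a subsequence $N_k$ along which $B_{N_k}$ grows geometrically makes the right-hand side summable in $k$ (since $(1+\delta)^2>1$), so by Borel--Cantelli $S_{N_k}(\phi)\le(1+\delta)(2B_{N_k}^2\log\log B_{N_k})^{1/2}$ eventually, for a.e.\ $\phi$. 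A maximal inequality of Salem--Zygmund type then controls the oscillation of $S_N$ between consecutive $N_k$, and letting $\delta\to0$ yields $\limsup S_N/(2B_N^2\log\log B_N)^{1/2}\le1$ a.e.

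For the \textbf{lower bound} I would decompose $S$ into block increments $\Delta_k=S_{N_{k+1}}-S_{N_k}$, which by lacunarity are nearly independent, set $V_k=\operatorname{Var}(\Delta_k)$, and use the companion \emph{lower} tail estimate from the exponential bound to show that, for fixed $\delta>0$, the events $\{\Delta_k>(1-\delta)(2V_k\log\log V_k)^{1/2}\}$ have probabilities whose sum diverges. A second Borel--Cantelli lemma in its form for quasi-independent events then guarantees that these events occur for infinitely many $k$ a.e.; combining this with the already-established upper bound on the complementary sums $S_{N_k}$ and letting $\delta\to0$ gives $\limsup\ge1$ a.e. The delicate points here are again quantifying the near-independence of the blocks well enough to run the divergence half of Borel--Cantelli, and checking that the smallness condition propagates to the individual blocks.
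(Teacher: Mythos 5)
The paper does not prove this statement: it is quoted as Theorem~A from Mary Weiss's paper \cite{W} (with \cite{SZ} as background) and used as a black box in the proof of Theorem 3.2. So there is no proof in the paper to compare yours against; I can only assess your sketch on its own terms.

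Your outline is the standard architecture of the known proof (Salem--Zygmund exponential estimates, Borel--Cantelli along a geometric subsequence plus a maximal inequality for the upper bound, block decomposition plus a second Borel--Cantelli for quasi-independent events for the lower bound), so the strategy is sound. But as written it is a plan rather than a proof: every step that carries the actual difficulty is named and then deferred. Concretely: (a) the exponential-moment estimate with the sharp constant $\exp(\tfrac12 t^2B_N^2(1+\varepsilon_N))$ is the crux, and for $\lambda$ close to $1$ you cannot dismiss the cross terms by saying they ``integrate to almost zero'' --- after expanding $e^{tS_N}$ one must count solutions of $\pm n_{i_1}\pm\cdots\pm n_{i_k}=0$, and the usual device of splitting into finitely many subseries of large ratio destroys the sharp constant unless recombined carefully; (b) the maximal inequality controlling $\max_{N_k<N\le N_{k+1}}|S_N-S_{N_k}|$ is asserted, not proved, and it is needed with the correct constant to let $\delta\to0$; (c) the lower bound is the hardest part and is precisely what Weiss added beyond Salem--Zygmund: it requires a two-sided (lower) Gaussian tail estimate for the block sums under only the hypothesis $M_N=o\bigl(B_N/(\log\log B_N)^{1/2}\bigr)$, together with a quantitative correlation bound between the block events to run the divergence half of Borel--Cantelli; you flag this but supply none of it. In short, the roadmap matches the literature, but the proof is not there; for the purposes of this paper the correct move is simply to cite \cite{W}, which is what the author does.
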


For $u\in\kv $ we define the weighted average
$$I_u (R,\p) =\int_{1/2}^{R} \fraction {u (re^{i\p})dv(r)} { (v (r))^2} .$$
See for example \cite  {St} for the definition of a Riemann-Stieltjes integral.
It follows from \eqref{1} that
$$|I_u (R,\p)| \le K \log v (R).  $$
We will prove the following:
\begin {theorem}
\label {main}
If $v $ satisfies \eqref{vdouble} and $u\in\hv $ is represented by a Hadamard gap series, then
\beq
\label  {the result}
 \limsup_{R\rightarrow 1}\fraction {I_u (R,\p)} {(\log v (R) \log\log \log v (R))^{1/2}} \le C
\eeq
for almost all $\p $, where $C$ depends on $D, K$ and $\lambda$.
\end {theorem}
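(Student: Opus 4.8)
The plan is to realise $I_u(R,\phi)$, for each fixed $R$, as a lacunary trigonometric series in $\phi$, and then to reduce to a \emph{single} fixed series to which Theorem A applies. Writing $a_{n_k}=\alpha_k+i\beta_k$ and interchanging summation with the Stieltjes integral, I would obtain
\[
I_u(R,\phi)=\sum_k\bigl(\alpha_k\cos n_k\phi-\beta_k\sin n_k\phi\bigr)\,c_k(R),\qquad c_k(R)=\int_{1/2}^R\frac{r^{n_k}}{v^2(r)}\,dv(r).
\]
This is lacunary in $\phi$ with coefficients $\alpha_k c_k(R)$ and $-\beta_k c_k(R)$; the obstruction to invoking Theorem A directly is that these depend on $R$. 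I would therefore replace them by their limits $c_k(1)=\int_{1/2}^1 r^{n_k}\,dv/v^2$, which are finite and, on splitting the integral at $r=1-1/n_k$ and using Lemma \ref{integral}, satisfy $c_k(1)\ls 1/g(n_k)$.

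Next I would fix $R$, let $N=N(R)$ be the largest index with $n_N\le 1/(1-R)$, and split $I_u(R,\phi)=S_N(\phi)+t(\phi)$. For the tail, $n_k>1/(1-R)$ forces $R<1-1/n_k$, so one integration by parts together with Lemma \ref{integral} gives $|c_k(R)|\ls R^{n_k}/v(R)$; combined with $|a_{n_k}|\le\gamma g(n_k)$ and the geometric decay (\ref{g}) already exploited in the proof of Theorem \ref{lacunary}, this yields $|t(\phi)|\ls \tfrac1{v(R)}\sum_{n_k>n_N}g(n_k)R^{n_k}\ls 1$ for every $\phi$. For the head I would replace $c_k(R)$ by $c_k(1)$: since $|c_k(1)-c_k(R)|\le\int_R^1 dv/v^2=1/v(R)$ and $\sum_{n_k\le N}|a_{n_k}|\le\gamma g(n_N)\ls v(R)$ by Theorem \ref{lacunary}, the error is again $\ls 1$ uniformly in $\phi$. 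Thus $I_u(R,\phi)=T_N(\phi)+O(1)$ for all $\phi$, where $T_N$ is the $N$-th partial sum of the fixed lacunary series with coefficients $\alpha_k c_k(1),\,-\beta_k c_k(1)$.

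Then I would apply Theorem A to $T_N$. Its parameters are $B_N^2=\tfrac12\sum_{k\le N}|a_{n_k}|^2 c_k(1)^2$ and $M_N=\max_{k\le N}|a_{n_k}|c_k(1)$. From $c_k(1)\ls 1/g(n_k)$ and $|a_{n_k}|\le\gamma g(n_k)$ one gets $M_N\ls 1$, so the hypothesis $M_N=o(B_N/(\log\log B_N)^{1/2})$ holds whenever $B_N\to\infty$ (if $B_N$ stays bounded the series converges a.e. and the estimate is trivial). The crucial point is the upper bound for $B_N^2$: writing $|a_{n_k}|^2/g(n_k)^2\ls|a_{n_k}|/g(n_k)$ and performing an Abel summation against $S(m)=\sum_{n_k\le m}|a_{n_k}|\le\gamma g(m)$ gives
\[
\sum_{k\le N}\frac{|a_{n_k}|}{g(n_k)}\ls 1+\sum_{k=1}^{N-1}\Bigl(1-\frac{g(n_k)}{g(n_{k+1})}\Bigr)\le 1+\log\frac{g(n_N)}{g(n_1)}\ls\log v(R),
\]
the middle inequality by comparison with $\int dg/g$. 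Hence $B_N^2\ls\log v(R)$, so $\log\log B_N\ls\log\log\log v(R)$, and Theorem A gives $T_N(\phi)\le(1+o(1))(2B_N^2\log\log B_N)^{1/2}\ls(\log v(R)\log\log\log v(R))^{1/2}$ for a.e.\ $\phi$. Combining with the $O(1)$ error and using $v(R)\ge g(n_N)$ on the $N$-th interval yields (\ref{the result}).

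The hard part will be the bound $B_N^2\ls\log v(R)$: a naive dyadic count of the terms only produces $\log n_N$, which for slowly growing weights (say $g(x)=\log x$) is far too large, and the gain to $\log g(n_N)=\log v(R)$ rests entirely on the telescoping inequality $\sum_k\bigl(1-g(n_k)/g(n_{k+1})\bigr)\le\log(g(n_N)/g(n_1))$, which uses that lacunarity leaves most dyadic scales empty. A secondary point requiring care is the transfer between the continuous $\limsup_{R\to1}$ and the discrete $\limsup_{N\to\infty}$ of Theorem A, handled by noting that $N(R)$ is constant and $v(R)\ge g(n_N)$ on each interval where $n_N\le 1/(1-R)<n_{N+1}$.
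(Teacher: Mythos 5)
Your proposal is correct and follows the same overall architecture as the paper's proof: the same realisation of $I_u(R,\phi)$ as a lacunary series, the same replacement of the $R$-dependent coefficients $c_k(R)$ by the fixed $c_k(1)$ with a uniform $O(1)$ error (split into tail, via Lemma \ref{integral} and the geometric decay \eqref{g}, and head, via $|c_k(1)-c_k(R)|\le 1/v(R)$ together with Theorem \ref{lacunary}), the same verification $M_N\ls 1$, and the same passage from the discrete $\limsup$ of Theorem A to the continuous one. The one genuinely different step is the crucial variance bound $B_N^2\ls\log v(R)$, which you correctly identify as the heart of the matter. The paper proves it (its inequality \eqref{ac}) by a block decomposition: it chooses levels $q_j$ with $g(q_j)=2g(q_{j-1})$, notes that $n_N$ lies in roughly $\log g(n_N)$ blocks, and bounds each block's contribution by a constant using $c_j\ls 1/g(q_k)$ and $\bigl(\sum_{n_j\le q_{k+1}}|\alpha_j|\bigr)^2\ls g(q_{k+1})^2$. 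You instead write $|a_{n_k}|^2c_k(1)^2\ls |a_{n_k}|/g(n_k)$ (using $M_N\ls 1$) and run an Abel summation against the partial sums $S(m)\le\gamma g(m)$, with the telescoping estimate $\sum_k\bigl(1-g(n_k)/g(n_{k+1})\bigr)\le\log\bigl(g(n_N)/g(n_1)\bigr)$. Both arguments are valid and of comparable length; yours has the small advantage of avoiding the auxiliary sequence $q_j$ and making transparent that the bound is really a consequence of the single hypothesis $\sum_{n_k\le m}|a_{n_k}|\le\gamma g(m)$, while the paper's block version is what it later reuses to show sharpness (where $B_N^2\simeq N\simeq\log g(2^{b_N})$ from below). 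One detail you should spell out in a final write-up: for $n_N<n_k\le n_NM$ the geometric decay \eqref{g} does not yet apply, so the corresponding finitely many tail terms must be bounded separately (as the paper does in its estimate \eqref{middle}); this is routine since there are at most $\log_\lambda M+1$ such indices, each contributing $\ls 1$.
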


\begin {proof}
Let
$$u (re^{i\p})   
=\sum_{j=0}^\infty \alpha_j r^{n_j}\cos (n_j\p)+\sum_{j=0}^\infty \beta_j r^{n_j}\sin (n_j\p)$$
where $\alpha_j ,\beta_j\in \R $,
then
$$I_u (R,\p) =\sum_{j=0}^\infty \alpha_j\int_{1/2}^{R} \fraction { r^{n_j}dv(r)} { (v (r))^2} \cos n_j\p+\sum_{j=0}^\infty \beta_j \int_{1/2}^{R} \fraction { r^{n_j}dv(r)} { (v (r))^2} \sin n_j\p.  $$
For simplicity we will only prove the result for the sum with cosines since the proof with sines is the same, so from now on we let $u (re^{i\p})   
=\sum_{k=0}^\infty \alpha_j r^{n_j}\cos (n_j\p) $ and 
$$I_u (R,\p) =\sum_{j=0}^\infty \alpha_j \int_{1/2}^{R} \fraction { r^{n_j}dv(r)} { (v (r))^2} \cos n_j\p. $$
Since $u $ is a Hadamard gap series there exists $\lambda>1 $ such that $n_{j+1}>\lambda n_{j}$ for all $j $. As before we may assume that $\lambda \le 2$ 
and $ n_{j+1}\le 4n_{j} $.
Let 
$$c_j = \int_{1/2}^{1} \fraction { r^{n_j}dv(r)} { (v (r))^2} $$
and
$$S_N (\p) =\sum_{j=0}^N \alpha_j c_j  \cos n_j\p .$$
Let also $r_N =1 -\fraction1 {n_N} $ and suppose  $R\in [r_N,r_{N +1}) $. 
We will first show that
\beq
\label {difference}
\left |I_u (R,\p)  -S_N (\p)  \right|\ls 1.  
\eeq
By \eqref {1} and \eqref {vdouble} we have
\beq
\label {idifference}
\left |I_u (R,\p) -I_u (r_N,\p)\right |\le K \int_{ r_N}^{r_{N +1}} \fraction {dv  (r)} { v (r)} =K \log\fraction {v (r_{N +1})}{v (r_{N })}\ls 1.
\eeq
Moreover,
\bas
I_u (r_N,\p) &=&\sum_{j=0}^\infty \alpha_j \int_{1/2}^{r_N} \fraction {r^{n_j} dv(r)} { (v (r))^2} \cos n_j\p =\sum_{j=0}^\infty \alpha_j b_{j,N} \cos n_j\p 
\eas
and
\beqs
b_{j,N}\le \int_{0}^{r_N} \fraction { r^{n_j}dv(r)} { (v (r))^2}  =\left [\fraction {- r^{n_j}}{v(r)}\right]_0^{r_N}+n_j\int_{0}^{r_N}\fraction  {r^{n_j -1}}{v(r)}dr\le n_j\int_{0}^{r_N} \fraction { r^{n_j-1}}{v(r)}dr.
\eeqs
By Lemma \ref{integral},
$$b_{j,N} 
\ls \fraction{r_N^{n_j}}{ g(n_N)}  $$
when $n_j\ge \max \{n_N,n_0\} $.
Let $M $ be a constant such that
 $D^2 e^{-M(\lambda -1)} <q <1.$
By Proposition \ref {coefficient}, 
\beq
\label {nm}
\sum_{n_j>n_NM} |\alpha_j b_{j,N} |\ls \sum_{n_j>n_NM} g (n_j)\fraction{r_N^{n_j}}{ g(n_N)} \le \sum_{n_j>n_NM}  \fraction{g (n_j)}{ g(n_N)}e^{ -n_j/n_N} . 
\eeq
For $n_j>n_NM $ we have
\begin{eqnarray*}
 \frac{g (n_{j+1} )e^{- n_{j+1}/n_N} }{g (n_{j} ) e^{- n_{j}/n_N}}\le  \frac{g (4n_{j} )  }{g (n_{j} ) }e^{-n_j (\lambda-1)/n_N} 
 \le D^2 e^{-M(\lambda -1)} <q <1,
\end{eqnarray*}
 thus \eqref {nm} is bounded by a constant independent of $N $. 
Furthermore, by Theorem \ref {lacunary}, 
\beq
\label {middle}
\sum_{n_N<n_j\le n_NM}  |\alpha_j b_{j,N}  |\ls \sum_{n_N<n_j\le n_NM}   \fraction{| \alpha_j| r_N^{n_j}}{ g(n_N)} \le\fraction{1} { g(n_N)}\sum_{n_j\le n_NM}   |\alpha_j|\ls \fraction{g (n_NM)} {g (n_N)}\ls 1. 
\eeq
We now need to estimate
$\sum_{n_j\le n_N}  |\alpha_j (b_{j,N} -c_j )| $. For $j\le N$,
$$|b_{j,N} -c_j | =\int_{r_N}^{1} \fraction {r^{n_j} dv  (r)} { (v (r))^2}  \le\int_{r_N}^{1} \fraction {dv  (r)} { (v (r))^2}  = \fraction1 {v (r_N)}, $$
then  by Theorem~\ref {lacunary} and \eqref{gdouble}
\beq
\label {less}
\sum_{n_j\le n_N}  |\alpha_j ||b_{j,N} -c_j |\le \fraction1 {v (r_N)}\sum_{n_j\le n_N} |\alpha_j|\ls \fraction{g (n_N)} {g (n_N)}=  1.
\eeq
Thus \eqref{difference} follows from \eqref{idifference}, \eqref  {nm}  , \eqref  {middle} and \eqref {less}:

\bas
\left |I_u (R,\p)  -S_N (\p)  \right|\le \left |I_u (R,\p) -I_u (r_N,\p)\right |+ \left |I_u (r_N,\p)-S_N (\p)\right |\\
 \ls 1 +\sum_{n_j\le n_N}  |\alpha_j ||b_{j,N} -c_j | +\sum_{n_N<n_j\le n_NM}  |\alpha_j b_{j,N}  |+\sum_{n_j>n_NM} |\alpha_j b_{j,N} |\ls 1.
\eas

We claim that
\beq
\label {ac}
\sum_{j=0}^{N} (\alpha_j c_j)^2\ls\log g(n_N)
\eeq
for any $N $. 
Let $q_0 =1 $ and choose $q_j $ such that $g (q_j) =2 g (q_{j-1})$.  
When $n_N \in (q_p,q_{p+1}]$ we have $\log g(n_N)\ls p$, so  \eqref{ac} would follow if 
\beq
\label {ac2}
\sum_{q_k <n_j\le  q_{k+1}} (\alpha_j c_j)^2\ls 1
\eeq
for all $ k$.
We have
\beq
\label {cj estimate}
c_j\le \int_{0}^{1} \fraction { r^{n_j}dv(r)} { (v (r))^2} =\left [\fraction {- r^{n_j}}{v(r)}\right]_0^1+n_j\int_{0}^{1}\fraction  {r^{n_j -1}}{v(r)}dr =n_j\int_{0}^{1} \fraction { r^{n_j-1}}{v(r)}dr
\eeq
and
\beq
\label {second part}
\int_{1-1/n_j}^{1} \fraction { r^{n_j-1}}{v(r)} dr \le \fraction{ 1}{v(1-1/n_j)}\int_{1-1/n_j}^{1}  r^{n_j-1}dr \le \fraction{ 1}{n_j g(n_j)}.
\eeq
By Lemma \ref {integral},
\bas
\label {first part2}
\int_0^{1-1/n_j
} \fraction { r^{n_j-1}}{v(r)}dr\ls  \fraction{ 1}{n_j g(n_j)}
\eas
for $n_j\ge n_{j_0} $,
and by this, \eqref{cj estimate} and \eqref{second part} we get
\beq
\label {c_j}
c_j\ls  \fraction{ 1}{ g(n_j)}.
\eeq
Thus for $q_k <n_j\le  q_{k+1} $, we have $c_j\ls   1/ g(q_k)$. Then  by Theorem \ref {lacunary},
\beqs
\sum_{q_k <n_j\le  q_{k+1}} (\alpha_j c_j)^2\le   \fraction{ 1}{( g(q_k))^2}\left(\sum_{n_j\le  q_{k+1}} |\alpha_j| \right)^2\ls  \fraction{ (g ( q_{k+1}))^2}{( g(q_k))^2}\ls 1
\eeqs
 and \eqref {ac2} is proved.

Now let
$$B_N = \left(\fraction12 \sum_{j=0}^N (\alpha_j c_j)^2\right)^{1/2} $$
 and
$$M_N =\max_{ 1\le j\le N} |\alpha_j c_j|. $$
We want to use Theorem 1 in \cite {W} to show that
$$ \limsup_{N\rightarrow \infty}\fraction {S_N (\p)} {(\log g(n_N) \log\log \log g(n_N))^{1/2}} \ls 1$$
for almost all $\p $. Then \eqref{the result} will follow from  \eqref{difference}.

If $B_N $  is bounded, then $S_N $ is bounded a.e. (see for example Theorem 6.3 in \cite [p. 203]{Z}), so there is nothing to prove.
By Proposition \ref {coefficient} and \eqref {c_j} we have $M_N\ls 1 $, so if $B_N\rightarrow\infty $, 
the conditions of Theorem A are fulfilled. 
Since   $B_N \ls (\log g(n_N))^{1/2}$ by \eqref {ac}, we get
$$ \limsup_{N\rightarrow \infty}\fraction {S_N (\p)} {(\log g(n_N) \log\log \log g(n_N))^{1/2}} \ls \limsup_{N\rightarrow \infty}\fraction {S_N (\p)} {(2B_N^2 \log\log B_N)^{1/2}}=1$$
for almost all $\p $, and we are done.

\end {proof}

\begin {remark} The same result is in general not true if $u $ is replaced by $|u| $, as the next  lemma shows. This indicates that the function is oscillating.
\end {remark}

\begin {lemma}
Let $v$ satisfy \eqref{vdouble}. There is a Hadamard gap series $u\in\hv$ such that
\beq \label {I}
 \limsup_{R\rightarrow 1^-} \fraction {I_{|u |} (R,\phi)} {\log v(R)}
 >0 
 \eeq
for all $\phi $ in a set of positive measure.
\end{lemma}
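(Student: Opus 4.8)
The plan is to construct an explicit Hadamard gap series whose averaged absolute value grows at the full rate $\log v(R)$, in contrast to the signed average from Theorem \ref{main}. A natural candidate is the function from Lemma \ref{first example}, namely $u(z)=\Re\sum_{k}g(2^{b_k})z^{2^{b_k}}$ with the lacunary exponents $b_k$ defined by (\ref{b}), so that along the reals the terms $g(2^{b_k})r^{2^{b_k}}$ dominate and $u\in\hv$ is already guaranteed. The idea is that for such a sparsely supported series, on a positive-measure set of angles $\phi$ the quantity $|u(re^{i\phi})|$ stays comparable to $v(r)$ on a substantial fraction of each dyadic scale, forcing $I_{|u|}(R,\phi)\gtrsim\log v(R)$.

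First I would analyze the behavior of $u(re^{i\phi})$ near a radius $r_k=1-2^{-b_k}$ where the single term of index $2^{b_k}$ is resonant: there $g(2^{b_k})r_k^{2^{b_k}}\simeq g(2^{b_k})\simeq v(r_k)$, while the earlier terms sum to at most $\frac{A}{A-1}g(2^{b_k})$ and the later terms are exponentially suppressed by $r^{2^{b_{k+1}}}$, exactly as in the convergence estimate of Lemma \ref{first example}. Thus near $r_k$ one term dominates, and $u(re^{i\phi})\approx g(2^{b_k})r^{2^{b_k}}\cos(2^{b_k}\phi)$. On the set of $\phi$ where $|\cos(2^{b_k}\phi)|\ge\frac12$ — which has measure bounded below uniformly in $k$ — this single-term domination yields $|u(re^{i\phi})|\gtrsim v(r)$ on a radial interval around $r_k$ whose contribution to the Stieltjes integral $\int \frac{|u|\,dv}{v^2}$ is $\gtrsim\int\frac{dv}{v}\simeq 1$, i.e. a fixed positive contribution per scale. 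Summing over $k\le K$ gives $I_{|u|}(R,\phi)\gtrsim K\simeq\log g(2^{b_K})\simeq\log v(R)$ on the union of good angles.

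The main obstacle is controlling the \emph{measure} of the angular set uniformly: the good-angle sets $E_k=\{\phi:|\cos(2^{b_k}\phi)|\ge\frac12\}$ each have positive measure, but they depend on $k$ and I need a single positive-measure set $E$ on which the radial contributions accumulate for infinitely many (indeed a positive fraction of) scales. This is where a lacunarity/independence argument is needed: since the frequencies $2^{b_k}$ are rapidly increasing, the events $\{\phi\in E_k\}$ behave like independent events of fixed probability, so by a Borel--Cantelli type or ergodic argument almost every $\phi$ lies in $E_k$ for a positive density of indices $k$. I would make this precise either via the classical fact that lacunary $\cos(n_k\phi)$ behave like independent random variables (so $\frac1K\sum_{k\le K}\mathbf{1}_{E_k}\to |E_1|/2\pi$ for a.e.\ $\phi$), or more elementarily by choosing $E$ to be a fixed small arc and checking that a positive density of the $2^{b_k}\phi$ land in the good range; this yields (\ref{I}) on a set of positive measure.

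The remaining steps are routine: verify that outside the resonant window the other terms cannot cancel the dominant one (using the geometric decay already established in Lemma \ref{first example}), convert the radial integral into the $dv/v^2$ weighting via the substitution that makes $\int_{r_k}^{r_{k+1}}\frac{dv(r)}{v(r)}\simeq\log\frac{v(r_{k+1})}{v(r_k)}\simeq\log A$ a fixed constant per scale, and assemble the lower bound $I_{|u|}(R,\phi)\gtrsim\log v(R)$ to conclude $\limsup_{R\to1^-}I_{|u|}(R,\phi)/\log v(R)>0$ on the positive-measure set $E$.
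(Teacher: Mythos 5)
You have the right function (the one from Lemma \ref{first example}) and the right target ($I_{|u|}(R,\phi)\gtrsim\log v(R)$ on a positive-measure set of $\phi$), but the route through pointwise single-term domination has a genuine gap. Near $r\approx 1-2^{-b_k}$ the resonant term contributes at most $e^{-1}g(2^{b_k})|\cos(2^{b_k}\phi)|$, while the earlier terms can sum to $\frac{1}{A-1}g(2^{b_k})$, which already exceeds $\frac{1}{2e}g(2^{b_k})$ unless $A$ is taken fairly large. Worse, the \emph{next} term is not "exponentially suppressed" in any useful sense: its coefficient satisfies $g(2^{b_{k+1}})>Ag(2^{b_k})$ while its radial factor at $r=1-2^{-b_k}$ is only $e^{-2^{b_{k+1}-b_k}}$, which equals $e^{-2}$ when $b_{k+1}=b_k+1$; so the tail can swamp the resonant term and cancel it. (This is fixable by choosing $A$ large relative to $D$, which forces $b_{k+1}-b_k\ge\log A/\log D$, and by carefully delimiting the resonant window — but that is real work, and your appeal to "the convergence estimate of Lemma \ref{first example}" does not help, since that proof contains no pointwise tail estimate at all.) In addition, upgrading "each $E_k$ has fixed positive measure" to "a positive density of good scales for a single positive-measure set of $\phi$" requires a genuine theorem (a strong law of large numbers for lacunary trigonometric sums), not just Borel--Cantelli, since the limsup needs $\gtrsim K$ good scales among the first $K$, not merely infinitely many.

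The paper avoids all pointwise lower bounds on $|u|$. By Parseval, $\int_0^{2\pi}|u(re^{i\phi})|^2d\phi=\sum_k g(2^{b_k})^2 r^{2^{b_k+1}}\ge g(2^{b_N})^2(1-2^{-b_N})^{2^{b_N+1}}\gtrsim v(r)^2$ — no cancellation is possible because every term is nonnegative. Combining with $\|u(re^{i\cdot})\|_2^2\le\|u(re^{i\cdot})\|_\infty\|u(re^{i\cdot})\|_1\le Kv(r)\|u(re^{i\cdot})\|_1$ gives $\int_0^{2\pi}|u(re^{i\phi})|d\phi\gtrsim v(r)$, and integrating in $r$ yields $\|I_{|u|}(R,\cdot)\|_1\ge c\log v(R)$. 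The positive-measure set then comes for free from the pointwise upper bound $I_{|u|}(R,\phi)\le K\log v(R)$: the set $E_R$ where $I_{|u|}(R,\phi)>\frac{c}{2}\log v(R)$ must have measure at least $c/(2K-c)$. I recommend you adopt this $L^2$--$L^1$ duality trick; it is the key idea that makes the lemma a few lines rather than a delicate stationary-phase-type analysis.
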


\begin {proof} Let $u(z)= \,\Re \sum_{k=0}^\infty g (2^{b_k}) z^{2^{b_k}}$, where $b_k$  are defined by \eqref{b}.
By \eqref{1} it follows that $I_{|u |} (R,\phi)\le K\log v(R) $. Assume we have also shown that 
\beq\label {I big} | |I_{|u |} (R,\cdot) | |_1\ge c\log v(R) 
\eeq 
  for 
  all $R\ge R_0>1/2$.  
  Let 
  $$E_R =\{\phi\in (-\pi,\pi]:I_{|u |} (R,\phi)>\fraction {c} {2}\log v(R)\}, $$ then
\bas
| |I_{|u |} (R,\cdot) | |_1& =&\int_{E_R} I_{|u |} (R,\phi)d\phi +\int_{E_R^c} I_{|u |} (R,\phi)d\phi\\
&\le& |E_R |K \log v(R) + (1-|E_R |)\fraction {c} {2}\log v(R),
\eas
hence $|E_R|\ge c/( 2K -c)$.
Then there exists a set of positive measure such that \eqref{I} is fulfilled.

Now it remains to show \eqref {I big}.
Fix $r $ and choose $N $ such that $1-1/2^{b_N}\le r \le 1-1/2^{b_{N+1}} $. By using that  $g(2^{b_{k+1}})\le AD g(2^{b_k})$, we get
\bas
\int_0^{2\pi} |u (re^{i \phi}) |^2d\phi =\sum_{k=0}^\infty g (2^{b_k})^2r^{2^{b_k+1}} \ge g (2^{b_N})^2 \left(1-\frac{1}{2^{b_N}}\right)^{2^{b_N+1}}\ge\fraction{1} {16A^2D^2} (v (r))^2.
\eas
 Furthermore,
$$\int_0^{2\pi} |u (re^{i \phi}) |^2d\phi\le \max_\phi |u (re^{i \phi}) |\int_0^{2\pi} |u (re^{i \phi}) |d\phi\le Kv(r)\int_0^{2\pi} |u (re^{i \phi}) |d\phi,$$
hence
$$\int_0^{2\pi} |u (re^{i \phi}) |d\phi\ge \fraction {1} {16A^2D^2K} v (r)  
.  $$
Then
$$\int_0^{2\pi} I_{|u |} (R,\phi)d\phi=\int_{1/2}^{R} \int_0^{2\pi} |u (re^{i \phi}) |d\phi\fraction {dv(r)} { (v (r))^2} \ge c\log v (R)  $$
for all $R\ge R_0>1/2$, and we are done.
\end {proof}

\subsection{ Sharpness of Theorem \ref {main}}
 The estimate is precise: 
 There exists a function in $\hv$ such that for some $a >0$
\beq
\label  {sharpness}
 \limsup_{R\rightarrow 1}\fraction {I_u (R,\p)} {(\log v (R) \log\log \log v (R))^{1/2}} \ge a
\eeq
for almost all $\p\in (-\pi,\pi] $.

To see this, let $u $ be the function from Lemma \ref{first example} with $A = 2$. Then
$$I_u (R,\p) =\sum_{j=0}^\infty g(2^{b_j}) \int_{1/2}^{R} \fraction { r^{2^{b_j}}dv(r)} { (v (r))^2} \cos 2^{b_j}\p. $$
Let $r_j=1-\fraction1 {2^{b_j}} $. It can be shown that as in the proof of theorem \ref{main},  
$$\left |I_u (R,\p)  -S_N (\p)  \right|\ls 1 $$ when $R\in (r_N,r_{N +1}) $. 
We also have
\beqs
c_j \ge r_j^{2^{b_j}} \int_{ r_j}^{1} \fraction { dv(r)} { (v (r))^2} \gs \fraction1{v(r_j)},
\eeqs
so $c_j\simeq \fraction1{v(r_j)} $.
By this and \eqref{b}, 
$$B_N^2 = \fraction12 \sum_{j=0}^N (g(2^{b_j}) c_j)^2 \simeq N\simeq \log g(2^{b_N}) =\log v (r_N). $$
Then
\beqs
 \limsup_{R\rightarrow 1}\fraction {I_u (R,\p)} {(\log v (R) \log\log \log v (R))^{1/2}} \gs  \limsup_{N\rightarrow \infty}\fraction {S_N (\p)} {(B_N^2 \log\log B_N)^{1/2}} =1
\eeqs
for almost all $\phi$ by Theorem A.

\section*{Acknowledgements}
I want to thank my Ph.D. advisor Eugenia Malinnikova for suggesting this problem and
for valuable help.
\bibliographystyle{plain}
\bibliography{lacunarybib3}

\end{document}